\newtheorem{theorem}{Theorem}[section]
\newtheorem{proposition}[theorem]{Proposition}
\newtheorem{lemma}[theorem]{Lemma}
\newtheorem{corollary}[theorem]{Corollary}
\newtheorem{definition}[theorem]{Definition}
\DeclareMathOperator{\convo}{\xrightarrow[]{o}}
\DeclareMathOperator{\convr}{\xrightarrow[]{ru}}
\DeclareMathOperator{\convw}{\xrightarrow[]{w}}
\DeclareMathOperator{\convn}{\xrightarrow[]{\|\cdot\|}}
\renewcommand{\subsection}{\@startsection{subsection}{1}
{0pt}{3.25ex plus 1ex minus.2ex}{-1em}{\normalfont\normalsize\bf}}
\begin{document}

\title{{\bf On automatic boundedness of some operators in ordered Banach spaces}}
\maketitle
\author{\centering{{Eduard Emelyanov$^{1}$\\ 
\small $1$ Sobolev Institute of Mathematics}
\abstract{We study order-to-weak continuous operators from an ordered Banach space to a normed space. 
It is proved that under rather mild conditions every order-to-weak continuous operator is bounded.}

\vspace{3mm}
{\bf Keywords:} ordered Banach space, order-to-norm bounded operator, order-to-norm continuous operator, Lebesgue operator

\vspace{3mm}
{\bf MSC2020:} {\normalsize 46A40, 46B42, 47B65}
}}

\section{Introduction and preliminaries}

\hspace{4mm}
Order-to-topology continuous and $\tau$-Lebesgue operators between vector lattices and topological vector spaces 
were studied recently in \cite{AEG2022,GE2022,JAM2021,KTA2025,ZSC2023}.
We adopt the definition to the setting of ordered vector spaces in the domain and normed spaces in the range.
It is important to find conditions on operators in ordered Banach spaces which provide automatic boundedness 
(see, for example \cite{AB2006,AT2007,AN2009} and the references therein). 
In the present paper, we continue the search of such conditions following \cite{E0-2025,EEG2025}.

We abbreviate ordered vector (normed, ordered vector, ordered Banach) spaces by \text{\rm OVSs} (\text{\rm NSs}, \text{\rm ONSs}, \text{\rm OBSs}) and 
vector (normed, Banach) lattices by \text{\rm VLs} (\text{\rm NLs}, \text{\rm BLs}).
Throughout the present paper, vector spaces are real, operators are linear,
symbol ${\cal L}(X,Y)$ stands for the space of operators from a vector space $X$ to a vector space $Y$, 
${\cal L}_{ob}(X,Y)$ for the space of order bounded operators from an OVS $X$ to an OVS $Y$,
$\text{\rm L}(X,Y)$ for the space of bounded operators from a NS $X$ to a NS $Y$, 
$x_\alpha\downarrow 0$ for a decreasing net in an OVS such that $\inf\limits_\alpha x_\alpha=0$, and $B_X$ for the 
closed unit ball of a NS $X$. A net $(x_\alpha)$ in an OVS $X$
\begin{enumerate}[-]
\item\ 
{\em order converges} to $x\in X$ 
(o-converges to $x$, or $x_\alpha\convo x$) whenever there exists 
$g_\beta\downarrow 0$ in $X$
such that, for each $\beta$ there is  $\alpha_\beta$ 
satisfying $\pm(x_\alpha-x)\le g_\beta$ for all $\alpha\ge\alpha_\beta$. 
\item\ 
{\em relative uniform converges} to $x\in X$ 
(ru-converges to $x$, or $x_\alpha\convr x$) 
if, for some $u\in X_+$ there exists an increasing sequence 
$(\alpha_n)$ of indices with $\pm(x_\alpha-x)\le\frac{1}{n}u$ for
all $\alpha\ge\alpha_n$. 
\end{enumerate}

\noindent
An operator $T$ from an OVS $X$ to an OVS $Y$ is {\em order continuous} if $Tx_\alpha\convo 0$ in $Y$
whenever $x_\alpha\convo 0$ in $X$ (the set of such operators is a vector space which is denoted by ${\cal L}_{oc}(X,Y)$). 

\begin{definition}\label{order-to-topology}
An operator $T$ from an OVS $X$ to a NS $Y$ is 
\begin{enumerate}[$a)$]
\item\
$(\sigma$-$)${\em Lebesgue} if $Tx_\alpha\convn 0$ for every $($sequence$)$ net $x_\alpha\downarrow 0$ in $X$ {\em \cite[Definition 1.1]{AEG2022}}.
The set of such operators is denoted by $\text{\rm L}_{Leb}(X,Y)$ $(\text{\rm L}^\sigma_{Leb}(X,Y))$. 
\item\
$(\sigma$-$)$\text{\rm w}-{\em Lebesgue} if $Tx_\alpha\convw 0$ for every $($sequence$)$ net $x_\alpha\downarrow 0$ in $X$ {\em \cite[Definition 1.1]{AEG2022}}.
The set of such operators is denoted by $\text{\rm L}_{wLeb}(X,Y)$ $(\text{\rm L}^\sigma_{wLeb}(X,Y))$. 
\item\
$(\sigma$-$)${\em order-to-norm continuous} if $Tx_\alpha\convn 0$ for every $($sequence$)$ net $x_\alpha\convo 0$ in $X$ {\em \cite{JAM2021}}.
The set of such operators is denoted by $\text{\rm L}_{onc}(X,Y)$ $(\text{\rm L}^\sigma_{onc}(X,Y))$. 
\item\
$(\sigma$-$)${\em order-to-weak continuous} if $Tx_\alpha\convw 0$ for every $($sequence$)$ net $x_\alpha\convo 0$ in $X$.
The set of such operators is denoted by $\text{\rm L}_{owc}(X,Y)$ $(\text{\rm L}^\sigma_{owc}(X,Y))$ {\em \cite{JAM2021}}. 
\item\
{\em order-to-norm bounded} if $T[a,b]$ is norm bounded for every order interval $[a,b]$ in $X$ {\em \cite[Definition 1.1]{EEG2025}}.
The set of such operators is denoted by $\text{\rm L}_{onb}(X,Y)$. 
\end{enumerate}
\end{definition}

\noindent
A positive operator $T$ from a VL to a NL is Lebesgue/$\sigma$-Lebesgue iff $T$ is 
order-to-norm continuous/$\sigma$-order-to-norm continuous by \cite[Lemma 2.1]{AEG2022} (see, also \cite[Proposition 1.5]{KTA2025}). 
The same fact holds under slightly more general assumptions.

\begin{lemma}\label{positive 1}
Let $X$ be a OVS, $Y$ a normal ONS, and $T\in{\cal L}_+(X,Y)$. Then 
$$
   T\in\text{\rm L}_{Leb}(X,Y)\Longleftrightarrow T\in\text{\rm L}_{onc}(X,Y) \ \ \ and \ \ \
   T\in\text{\rm L}^\sigma_{Leb}(X,Y)\Longleftrightarrow T\in\text{\rm L}^\sigma_{onc}(X,Y).
$$
\end{lemma}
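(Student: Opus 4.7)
\smallskip\noindent
\textbf{Proof plan.} The plan is to handle the two implications separately and to run the same argument in parallel for nets and for sequences, since the $\sigma$-version is identical once nets are replaced by sequences throughout.

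For the easy inclusion $\text{\rm L}_{onc}(X,Y)\subseteq\text{\rm L}_{Leb}(X,Y)$, I would simply observe that $x_\alpha\downarrow 0$ in $X$ already implies $x_\alpha\convo 0$: in the definition of o-convergence take $g_\beta:=x_\beta$ and $\alpha_\beta:=\beta$. Order-to-norm continuity of $T$ then yields $Tx_\alpha\convn 0$. Positivity and normality are not needed for this direction.

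For the substantial inclusion $\text{\rm L}_{Leb}(X,Y)\subseteq\text{\rm L}_{onc}(X,Y)$, assume $T\in\text{\rm L}_{Leb}(X,Y)$ and suppose $x_\alpha\convo 0$ in $X$. Unpacking the definition, pick $g_\beta\downarrow 0$ in $X$ and indices $\alpha_\beta$ with $-g_\beta\le x_\alpha\le g_\beta$ for all $\alpha\ge\alpha_\beta$. Apply the positive operator $T$ to obtain the two-sided estimate $-Tg_\beta\le Tx_\alpha\le Tg_\beta$ with $Tg_\beta\ge 0$. Since $g_\beta\downarrow 0$ in $X$ and $T$ is Lebesgue, $\|Tg_\beta\|\to 0$. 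The pivotal step is to convert the order sandwich in $Y$ into a norm estimate: using that $Y$ is normal, fix a constant $C>0$ such that $-y\le z\le y$ with $y\ge 0$ implies $\|z\|\le C\|y\|$ in $Y$. Applied to $z=Tx_\alpha$ and $y=Tg_\beta$, this gives $\|Tx_\alpha\|\le C\|Tg_\beta\|$ for every $\alpha\ge\alpha_\beta$, and letting $\beta$ vary we conclude $Tx_\alpha\convn 0$, i.e., $T\in\text{\rm L}_{onc}(X,Y)$.

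The only step I expect any resistance from is the translation of the order sandwich $-Tg_\beta\le Tx_\alpha\le Tg_\beta$ into the norm bound $\|Tx_\alpha\|\le C\|Tg_\beta\|$; this is exactly what the normality hypothesis on $Y$ is designed to supply, and it is indispensable because $Y$ carries no lattice structure, so we cannot replace $Tx_\alpha$ by $|Tx_\alpha|$. Positivity of $T$ is used once, solely to transport the order sandwich from $X$ to $Y$. The $\sigma$-statement follows by the same argument, taking $(g_n)$ and $(\alpha_n)$ to be sequences.
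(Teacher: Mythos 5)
Your proof is correct and follows essentially the same route as the paper's: apply the positive operator to the dominating net $g_\beta\downarrow 0$, use the Lebesgue property to get $\|Tg_\beta\|\to 0$, and then use normality of $Y$ to squeeze $\|Tx_\alpha\|\to 0$ from the sandwich $-Tg_\beta\le Tx_\alpha\le Tg_\beta$ (the paper cites the normality sandwich theorem of Aliprantis--Tourky where you spell out the constant, which is the same fact). Your explicit treatment of the easy inclusion $\text{\rm L}_{onc}\subseteq\text{\rm L}_{Leb}$ is a detail the paper leaves implicit, and it is fine.
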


\begin{proof}
As the $\sigma$-Lebesgue case is similar, we consider only the first equivalence. 
It suffices to show $\text{\rm L}_{Leb}(X,Y)\subseteq\text{\rm L}_{onc}(X,Y)$.
Suppose $T\in\text{\rm L}_{Leb}(X,Y)$ and take $x_\alpha\convo 0$ in $X$.
Pick a net $g_\beta\downarrow 0$ in $X$ such that, for each $\beta$, there exists $\alpha_\beta$
with $\pm x_\alpha\le g_\beta$ for $\alpha\ge\alpha_\beta$. Since $T\ge 0$,
we obtain $\pm Tx_\alpha\le Tg_\beta$ for $\alpha\ge\alpha_\beta$. 
As $T$ is Lebesgue, $\|Tg_\beta\|\to 0$. So, since $Tg_\beta\downarrow\ge 0$ then $Tg_\beta\downarrow 0$.
It follows from \cite[Theorem 2.23]{AT2007} that $\|Tx_\alpha\|\to 0$. Thus, $T\in\text{\rm L}_{onc}(X,Y)$.
\end{proof}

\noindent
By \cite[Proposition 2.2]{AEG2022}, a positive order continuous operator $T$ from a VL to a NL 
is Lebesgue iff $T$ is \text{\rm w}-Lebesgue. The next lemma extends this fact to the setting of OVSs.

\begin{lemma}\label{positive 2}
Let $X$ be a OVS, $Y$ a normal ONS, and $T\in({\cal L}_{oc})_+(X,Y)$. Then 
$$
   T\in\text{\rm L}_{Leb}(X,Y)\Longleftrightarrow T\in\text{\rm L}_{wLeb}(X,Y) \ \ \ and \ \ \
   T\in\text{\rm L}^\sigma_{Leb}(X,Y)\Longleftrightarrow T\in\text{\rm L}^\sigma_{wLeb}(X,Y).
$$
\end{lemma}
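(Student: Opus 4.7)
The forward direction of each equivalence is trivial, since norm convergence implies weak convergence, so every $(\sigma$-$)$Lebesgue operator is $(\sigma$-$)$w-Lebesgue; the substance lies in the reverse implication, which I would handle by one argument that transcribes from nets to sequences.

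To prove $\text{\rm L}_{wLeb}(X,Y)\cap({\cal L}_{oc})_+(X,Y)\subseteq\text{\rm L}_{Leb}(X,Y)$, I would fix such a $T$ and a net $x_\alpha\downarrow 0$ in $X$. Positivity of $T$ makes $(Tx_\alpha)$ a decreasing net in $Y_+$, and the w-Lebesgue hypothesis yields $Tx_\alpha\convw 0$, so $0$ lies in the weak closure of every tail of $(Tx_\alpha)$. The key step is Mazur's theorem: for a convex set, the weak closure coincides with the norm closure. It follows that, given $\varepsilon>0$ and any index $\alpha_0$, there is a convex combination $y=\sum_{i=1}^n c_i\,Tx_{\alpha_i}$ with all $\alpha_i\ge\alpha_0$ and $\|y\|<\varepsilon$.

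To conclude, I would use directedness to pick $\alpha_*$ dominating $\alpha_1,\ldots,\alpha_n$. Monotonicity of $(Tx_\alpha)$ then gives $0\le Tx_{\alpha_*}\le y$, and $0\le Tx_\alpha\le Tx_{\alpha_*}$ for every $\alpha\ge\alpha_*$. Two applications of the normality constant $K$ of $Y$ give $\|Tx_\alpha\|\le K^2\|y\|<K^2\varepsilon$ for $\alpha\ge\alpha_*$, whence $\|Tx_\alpha\|\to 0$ and $T$ is Lebesgue. The $\sigma$-version is the obvious sequential analogue, with Mazur's theorem applied in its familiar form to the sequence $(Tx_n)$.

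The main potential obstacle is the Mazur-to-directedness step: the theorem supplies a finite convex combination whose several indices must be collapsed into one tail index $\alpha_*$, and this is exactly what directedness of the index set provides. I note that order continuity of $T$ does not literally enter this route; it could alternatively be used to first deduce $Tx_\alpha\downarrow 0$ in $Y$ (via $x_\alpha\convo 0\Rightarrow Tx_\alpha\convo 0$ combined with the decreasingness of $(Tx_\alpha)$ in $Y_+$) and then to invoke \cite[Theorem 2.23]{AT2007} as in Lemma \ref{positive 1}, but the Mazur argument above seems shorter and symmetric.
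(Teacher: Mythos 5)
Your proof is correct, and its skeleton coincides with the paper's: both use positivity of $T$ to reduce the claim to the fact that a decreasing weakly null net in the normal cone $Y_+$ is norm null. The only difference is how that fact is handled: the paper simply cites \cite[Lemma 2.28]{AT2007} at this point, whereas you reprove it inline via Mazur's theorem ($0$ lies in the norm closure of the convex hull of each tail of $(Tx_\alpha)$), directedness (collapsing the finitely many indices of a small convex combination $y$ into one index $\alpha_*$ with $0\le Tx_\alpha\le Tx_{\alpha_*}\le y$ for $\alpha\ge\alpha_*$), and normality (turning the order estimate into a norm estimate --- note that a single application of the normality constant already suffices, since $0\le Tx_\alpha\le y$ directly). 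This Mazur argument is in fact the standard proof of the cited lemma, so what you gain is self-containedness rather than a genuinely different route. Your closing remark is also accurate and applies equally to the paper's proof: order continuity of $T$ is never used; the hypothesis $T\in({\cal L}_{oc})_+(X,Y)$ enters only through positivity.
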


\begin{proof}
We consider only the Lebesgue case. It suffices to show $\text{\rm L}_{wLeb}(X,Y)\subseteq\text{\rm L}_{Leb}(X,Y)$.
Suppose $T\in\text{\rm L}_{wLeb}(X,Y)$ and take $x_\alpha\downarrow 0$ in $X$.
Since $T\ge 0$ then $Tx_\alpha\downarrow\ge 0$, and as $T$ is \text{\rm w}-Lebesgue, $Tx_\alpha\convw 0$. 
By \cite[Lemma 2.28]{AT2007}, we conclude $\|Tx_\alpha\|\to 0$. Thus, $T\in\text{\rm L}_{Leb}(X,Y)$.
\end{proof}

\noindent
The following lemma is a direct consequence of Definition \ref{order-to-topology}.

\begin{lemma}\label{directly}
Let $X$ be an OVS and $Y$ a NS. Then 
$$
   \text{\rm L}_{Leb}(X,Y)\subseteq\text{\rm L}_{wLeb}(X,Y), \ \ 
   \text{\rm L}^\sigma_{Leb}(X,Y)\subseteq\text{\rm L}^\sigma_{wLeb}(X,Y), 
$$
$$
   \text{\rm L}_{onc}(X,Y)\subseteq\text{\rm L}_{owc}(X,Y), \ \ 
   \text{\rm L}^\sigma_{onc}(X,Y)\subseteq\text{\rm L}^\sigma_{owc}(X,Y),  
$$
$$
   \text{\rm L}_{onc}(X,Y)\subseteq\text{\rm L}_{Leb}(X,Y)\bigcap\text{\rm L}^\sigma_{onc}(X,Y)\subseteq 
   \text{\rm L}_{Leb}(X,Y)\bigcup\text{\rm L}^\sigma_{onc}(X,Y)\subseteq\text{\rm L}^\sigma_{Leb}(X,Y), \ \ \text{\rm and}
$$
$$
   \text{\rm L}_{owc}(X,Y)\subseteq\text{\rm L}_{wLeb}(X,Y)\bigcap\text{\rm L}^\sigma_{owc}(X,Y)\subseteq 
   \text{\rm L}_{wLeb}(X,Y)\bigcup\text{\rm L}^\sigma_{owc}(X,Y)\subseteq\text{\rm L}^\sigma_{wLeb}(X,Y).
$$
\end{lemma}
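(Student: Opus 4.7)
The plan is to verify each inclusion directly from Definition \ref{order-to-topology}, leaning on three elementary observations: norm convergence implies weak convergence in the range $Y$; every sequence is a net; and $x_\alpha\downarrow 0$ in an OVS $X$ implies $x_\alpha\convo 0$.

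For the third observation, given $x_\alpha\downarrow 0$ I would take $g_\beta:=x_\beta$ (indexed by the same directed set) as the witnessing decreasing net, and set $\alpha_\beta:=\beta$. Then $g_\beta\downarrow 0$ by hypothesis, and for $\alpha\ge\beta$ we have $0\le x_\alpha\le x_\beta=g_\beta$, hence $-g_\beta\le x_\alpha-0\le g_\beta$, so that $x_\alpha\convo 0$ in the sense of Definition \ref{order-to-topology}. The same construction applied to a sequence $x_n\downarrow 0$ shows $x_n\convo 0$.

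The two inclusions in the first displayed line, and the two in the second, follow at once because norm convergence in $Y$ implies weak convergence in $Y$, so every (sequence) net on which $T$ sends $x_\alpha\downarrow 0$ (resp.\ $x_\alpha\convo 0$) to a norm-null net is automatically sent to a weakly-null net. For the third line, $\text{L}_{onc}(X,Y)\subseteq\text{L}_{Leb}(X,Y)$ follows from the third observation (a net with $x_\alpha\downarrow 0$ is order convergent to $0$, so $Tx_\alpha\convn 0$), while $\text{L}_{onc}(X,Y)\subseteq\text{L}^\sigma_{onc}(X,Y)$ follows from the second observation (a sequence $x_n\convo 0$ is in particular a net $x_\alpha\convo 0$, so $Tx_n\convn 0$). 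The middle containment is the trivial $A\cap B\subseteq A\cup B$. Finally, $\text{L}_{Leb}(X,Y)\subseteq\text{L}^\sigma_{Leb}(X,Y)$ is the sequence-as-net observation applied to $\downarrow 0$, and $\text{L}^\sigma_{onc}(X,Y)\subseteq\text{L}^\sigma_{Leb}(X,Y)$ is the third observation applied to sequences.

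The fourth line is obtained from the third by replacing $\convn$ with $\convw$ throughout; the same three observations (with weak convergence in place of norm convergence) handle it verbatim. There is no real obstacle here: once the implication $x_\alpha\downarrow 0\Rightarrow x_\alpha\convo 0$ is isolated, the entire lemma is bookkeeping among the definitions.
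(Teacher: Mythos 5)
Your proposal is correct and matches the paper's approach: the paper offers no written proof, simply declaring the lemma a direct consequence of Definition \ref{order-to-topology}, and your three observations (norm implies weak convergence, sequences are nets, and $x_\alpha\downarrow 0$ implies $x_\alpha\convo 0$ via the witness $g_\beta:=x_\beta$) are exactly the bookkeeping that claim leaves implicit.
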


\begin{proposition}\label{onb = bdd}
Let $X$ be an OBS with a closed generating normal cone and $Y$ a NS. Then $\text{\rm L}_{onb}(X,Y)=\text{\rm L}(X,Y)$.
\end{proposition}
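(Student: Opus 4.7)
The plan is to establish the two inclusions separately. The direction $\text{\rm L}(X,Y)\subseteq\text{\rm L}_{onb}(X,Y)$ is routine: normality of $X_+$ in the normed space $X$ is equivalent to every order interval $[a,b]\subseteq X$ being norm bounded, and so any bounded operator sends $[a,b]$ to a norm bounded set in $Y$.

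For the nontrivial inclusion $\text{\rm L}_{onb}(X,Y)\subseteq\text{\rm L}(X,Y)$, the key tool is the classical Ando--Krein--Smulian decomposition theorem: since $X$ is a Banach space with a closed generating cone, there exists a constant $M>0$ such that every $x\in X$ admits a representation $x=u-v$ with $u,v\in X_+$ and $\max\{\|u\|,\|v\|\}\le M\|x\|$. I would then argue by contradiction. Assuming $T\in\text{\rm L}_{onb}(X,Y)$ is unbounded, choose $(x_n)\subseteq X$ with $\|x_n\|\le 2^{-n}$ yet $\|Tx_n\|\to\infty$, and decompose $x_n=u_n-v_n$ with $\|u_n\|,\|v_n\|\le M\cdot 2^{-n}$. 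By completeness of $X$, the series $\sum_n u_n$ and $\sum_n v_n$ converge absolutely to limits $u,v\in X_+$. Applying closedness of $X_+$ to the tail sums $\sum_{k>N}u_k$ yields $\sum_{k\le N}u_k\le u$, hence $0\le u_n\le u$ for every $n$; analogously $0\le v_n\le v$. Thus $(u_n)\subseteq[0,u]$ and $(v_n)\subseteq[0,v]$ lie in fixed order intervals, so order-to-norm boundedness of $T$ forces both $(Tu_n)$ and $(Tv_n)$ to be norm bounded in $Y$. But then $Tx_n=Tu_n-Tv_n$ is bounded in norm, contradicting $\|Tx_n\|\to\infty$.

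The main obstacle is recognising that the Ando decomposition applies under the hypotheses at hand; once that is secured, the summation-based contradiction is immediate. Notice that normality of $X_+$ is used only for the easy inclusion, while the closed generating property combined with Banach completeness is precisely what drives the nontrivial one via absolutely convergent series.
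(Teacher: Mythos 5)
Your argument is correct, and it differs from the paper's in that it is self-contained: the paper disposes of the nontrivial inclusion $\text{\rm L}_{onb}(X,Y)\subseteq\text{\rm L}(X,Y)$ in one line by citing \cite[Corollary 2.9]{EEG2025} (the statement that order-to-norm bounded operators from an OBS with closed generating cone are bounded), and of the easy inclusion by invoking normality, exactly as you do. What you supply in addition is the actual mechanism behind the cited result: the Ando--Krein--\v{S}muljan open decomposition (closed generating cone in a Banach space gives a uniform constant $M$ with $x=u-v$, $u,v\in X_+$, $\|u\|,\|v\|\le M\|x\|$), followed by the standard ``absolutely convergent series trapped in an order interval'' contradiction, where closedness of $X_+$ is used again to see that the tails $\sum_{k>N}u_k$ are positive and hence $0\le u_n\le u$. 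This is essentially the proof of \cite[Theorem 2.8]{EEG2025} reconstructed from scratch, so the mathematical content is the same even though your write-up replaces a citation by a complete argument; the only thing the citation buys the paper is brevity, while your version makes visible that normality is needed only for the trivial direction and that completeness plus the closed generating cone drives the hard one. One small caveat: you assert that normality of $X_+$ is \emph{equivalent} to norm boundedness of all order intervals. Only the implication ``normal $\Rightarrow$ order intervals bounded'' is true in general (and is all you use); the converse can fail in a normed space without further hypotheses, so you should weaken ``equivalent to'' to ``implies''.
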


\begin{proof}
The inclusion $\text{\rm L}_{onb}(X,Y)\subseteq\text{\rm L}(X,Y)$ follows from \cite[Corollary 2.9]{EEG2025}. 
The reverse inclusion hods due to normality of $X$.
\end{proof}

\medskip
The present note is organized as follows.
Lemma \ref{o-cont are obdd} states that every $\sigma$-\text{\rm w}-Lebesgue operator is order-to-norm bounded whenever OBS in the domain has normal cone.
Theorem \ref{OBS o-cont are obdd} states that if the domain has closed generating normal cone then every $\sigma$-\text{\rm w}-Lebesgue operator is bounded.
Theorem \ref{Lebesgue are onc} gives conditions under which ($\sigma$-)(\text{\rm w}-)Lebesgue operators are ($\sigma$-)order to (weak) norm continuous.


For the terminology and notations that are not explained in the text, we refer to \cite{AB2006,AT2007}.

\section{\large Main results}

\hspace{4mm}
It is established in \cite[Theorem 2.8]{EEG2025} that every order-to-norm bounded operator from 
an OBS with closed generating cone to a NS is bounded (see also \cite[Theorem 2.1]{E0-2025} for the Banach lattice case). 
Accordingly to \cite[Theorem 2.4]{EEG2025}, each order bounded operator from an OVS to an OVS with generating cones is ru-continuous.
Combining these two results and keeping in mind that order intervals in an OBS with normal cone are bounded, we obtain the following automatic 
boundedness conditions of order bounded operators: 

\begin{enumerate}[(*)]
\item\
${\cal L}_{ob}(X,Y)\subseteq\text{\rm L}(X,Y)$ whenever $X$ and $Y$ are OBSs with generating cones such that $X_+$ is closed and $Y_+$ is normal.
\end{enumerate}

\noindent
Another case of automatic boundedness of operators concerns the order continuity. 
Namely, as every order continuous operator from an Archimedean OVS with generating cone to an OVS is order bounded \cite[Corollary 2.2]{EEG2025}, we have:

\begin{enumerate}[(**)]
\item\
${\cal L}_{oc}(X,Y)\subseteq\text{\rm L}(X,Y)$ whenever $X$ and $Y$ are OBSs with generating cones such that $X_+$ is closed and $Y_+$ is normal.
\end{enumerate}

\noindent 
Condition $X_+-X_+=X$ is essential in both (*) and (**) since each operator $T:X\to X$ is order bounded and order continuous whenever $X_+=\{0\}$.
An inspection of \cite[Example 2.12 b)]{EEG2025} tells us that each infinite dimensional Banach space $X$ has a non-closed generating Archimedean cone so that
every operator $T:X\to X$ is order continuous (and hence order bounded by \cite[Corollary 2.2]{EEG2025}).
 
\medskip
\noindent 
It is worth nothing that the order-to-norm and order-to-weak continuous operators on VLs were defined in \cite{JAM2021}
without specifying the class of (just linear or continuous linear) operators.
Automatic boundedness of such operators was investigated in recent paper \cite{KTA2025}. 
In this section we establish automatic boundedness conditions for order-to-weak continuous operators on OVSs. More precisely,
Theorem \ref{OBS o-cont are obdd} which tells us that every $\sigma$-\text{\rm w}-Lebesgue operator from an OBS with 
a closed generating normal cone to a NS is bounded.

\medskip
We begin with conditions under which $\sigma$-\text{\rm w}-Lebesgue operators are order-to-norm bounded. 

\begin{lemma}\label{o-cont are obdd}
Let $X$ be a normal OBS and $Y$ a NS. Then $\text{\rm L}^\sigma_{wLeb}(X,Y)\subseteq\text{\rm L}_{onb}(X,Y)$. 
In particular, $\text{\rm L}^\sigma_{owc}(X,Y)\subseteq\text{\rm L}_{onb}(X,Y)$.
\end{lemma}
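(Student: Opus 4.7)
The plan is to proceed by contradiction. Since $[a,b] = a + [0, b-a]$ and norm boundedness is translation invariant, it suffices to prove that $T[0,u]$ is norm bounded for every $u \in X_+$. Suppose to the contrary that for some $u \in X_+$ one can choose $x_n \in [0,u]$ with $\|Tx_n\| \ge n^3$, and set $y_n := x_n/n^2$, so $y_n \in [0,u/n^2]$ and $\|Ty_n\| \ge n$. The idea is to assemble the $y_n$ into tails that form a sequence $s_k \downarrow 0$ in $X$, whereupon the $\sigma$-\text{\rm w}-Lebesgue hypothesis will give a weakly convergent, hence norm bounded, sequence $(Ts_k)$, contradicting the growth of $\|Ty_k\| = \|T(s_k - s_{k+1})\|$.

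By normality of the cone there is a constant $C$ with $\|y_n\| \le C\|u\|/n^2$, so $\sum_n y_n$ converges absolutely in $X$ to some $s \in X_+$. Setting $s_k := \sum_{n \ge k} y_n$ gives a decreasing sequence of positive elements with $\|s_k\| \to 0$. The crucial step is to verify that $\inf_k s_k = 0$. For this I would use the partial-sum estimate
\[
\sum_{n=k}^{N} y_n \le \Bigl(\sum_{n=k}^{N} \frac{1}{n^2}\Bigr) u \le \frac{u}{k-1} \qquad (k \ge 2,\, N \ge k),
\]
pass to the norm limit in $N$ to obtain $s_k \le u/(k-1)$, and then conclude via the Archimedean property (itself a consequence of normality) that any common lower bound $w$ of $(s_k)_k$ satisfies $w \le u/(k-1)$ for every $k$, hence $w \le 0$.

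With $s_k \downarrow 0$ in hand, the $\sigma$-\text{\rm w}-Lebesgue hypothesis yields $Ts_k \convw 0$, and $M := \sup_k \|Ts_k\| < \infty$ by weak, hence norm, boundedness of convergent sequences. Then $\|Ty_k\| \le 2M$ for every $k$, contradicting $\|Ty_k\| \ge k$. The ``in particular'' statement follows immediately, since Lemma \ref{directly} gives $\text{\rm L}^\sigma_{owc}(X,Y) \subseteq \text{\rm L}^\sigma_{wLeb}(X,Y)$. The main obstacle, as I see it, is the passage to the norm limit in $\sum_{n=k}^{N} y_n \le u/(k-1)$: this step requires that order inequalities survive norm convergence, i.e.\ that the positive cone is norm-closed. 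Whether this is built into the paper's convention on OBSs or needs to be invoked separately is the point where the argument most needs care.
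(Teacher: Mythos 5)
Your construction is essentially the paper's: argue by contradiction, pick elements of $[0,u]$ whose images grow rapidly in norm, sum them against summable weights (you use $n^{-2}$, the paper uses $2^{-k}$ --- immaterial), show that the tails of the resulting norm-convergent series decrease to zero, and then contradict the norm boundedness of the weakly null image sequence via consecutive differences. The approach and the key quantitative trick are the same, so the proposal is sound to exactly the extent the paper's own proof is.

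Two remarks on the point you raise at the end. First, you are right that the steps $s_k\ge 0$ and $s_k\le u/(k-1)$ require order inequalities to survive norm limits, i.e.\ a norm-closed cone, and closedness is not among the stated hypotheses of the lemma; but the paper's proof makes exactly the same tacit move (it asserts that $y_n:=\sum_{k\ge n}2^{-k}u_k$ satisfies $y_n\ge 0$ and $y_n\le 2^{1-n}u$ without comment), and every later application of the lemma assumes a closed cone anyway, so you have introduced no gap that the paper avoids. Second, the one genuine error in your write-up is the parenthetical claim that the Archimedean property is ``itself a consequence of normality'': it is a consequence of closedness of the cone, not of normality. For instance, the open positive quadrant of $\mathbb{R}^2$ with the origin adjoined is a generating normal cone that is not Archimedean (take $x=(0,-1)$ and $y=(1,1)$; then $nx\le y$ for all $n$ but $x\not\le 0$). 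Normality yields only the two-sided squeeze ($0\le w\le z_n$ with $\|z_n\|\to 0$ forces $w=0$), which is what the paper invokes via \cite[Theorem 2.23]{AT2007}; your one-sided deduction from $(k-1)w\le u$ to $w\le 0$ needs the closed cone. Since you need closedness for the limit-passing step anyway, this costs nothing extra --- and your handling of an \emph{arbitrary} lower bound $w$ of $(s_k)$ is actually more complete than the paper's, which only tests lower bounds $y_0\ge 0$.
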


\begin{proof}
Let $T:X\to Y$ be $\sigma$-\text{\rm w}-Lebesgue. If $T$ is not order-to-norm bounded then $T[0,u]$ is not bounded for some $u\in X_+$. 
Since $X$ is normal then $[0,u]$ is bounded, say $\sup\limits_{x\in[0,u]}\|x\|\le M$. 
Pick a sequence $(u_n)$ in $[0,u]$ with $\|Tu_n\|\ge n2^n$, and set $y_n:=\|\cdot\|$-$\sum\limits_{k=n}^\infty 2^{-k}u_k$ for $n\in\mathbb{N}$.
Then $y_n\downarrow\ge 0$. Let $0\le y_0\le y_n$ for all $n\in\mathbb{N}$.
Since $0\le y_0\le 2^{1-n}u$ and $\|2^{1-n}u\|\le M2^{1-n}\to 0$ then $y_0=0$ by \cite[Theorem 2.23]{AT2007}.
Thus, $y_n\downarrow 0$. It follows from $T\in\text{\rm L}^\sigma_{wLeb}(X,Y)$ that $(Ty_n)$ is \text{\rm w}-null, 
and hence is norm bounded, that is absurd because
$\|Ty_{n+1}-Ty_n\|=\|T(2^{-n}u_n)\|\ge n$ for all $n\in\mathbb{N}$. 

The second inclusion follows from Lemma \ref{directly}.
\end{proof}

\noindent
Combining Lemma \ref{o-cont are obdd} with Proposition \ref{onb = bdd} and Lemma \ref{directly}, we obtain.

\begin{theorem}\label{OBS o-cont are obdd}
Let $X$ be an OBS with a closed generating normal cone and $Y$ a NS. 
Then $\text{\rm L}^\sigma_{wLeb}(E,Y)\subseteq\text{\rm L}(E,Y)$.
In particular, $\text{\rm L}^\sigma_{ows}(E,Y)\subseteq\text{\rm L}(E,Y)$.
\end{theorem}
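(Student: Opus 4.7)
The plan is essentially a one-line combination of the lemmas already in hand; the substantive work has all been done, and what remains is simply to chain the inclusions and to verify that the hypotheses on $X$ match each invocation.

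First I would observe that since $X$ is assumed to be an OBS with a closed generating normal cone, in particular the cone of $X$ is normal, so $X$ is a normal OBS in the sense required by Lemma \ref{o-cont are obdd}. That lemma immediately gives the inclusion
\[
  \text{\rm L}^\sigma_{wLeb}(X,Y)\subseteq\text{\rm L}_{onb}(X,Y).
\]
Next, since the cone of $X$ is additionally closed and generating, the hypotheses of Proposition \ref{onb = bdd} are satisfied, and that proposition yields
\[
  \text{\rm L}_{onb}(X,Y)=\text{\rm L}(X,Y).
\]
Combining these two inclusions proves the first assertion, $\text{\rm L}^\sigma_{wLeb}(X,Y)\subseteq\text{\rm L}(X,Y)$. (I would also silently correct the apparent typos $E\to X$ and $ows\to owc$ in the displayed statement.)

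For the ``in particular'' part about $\sigma$-order-to-weak continuous operators, I would invoke Lemma \ref{directly}, which records the elementary chain $\text{\rm L}^\sigma_{owc}(X,Y)\subseteq\text{\rm L}^\sigma_{wLeb}(X,Y)$, and then compose with the inclusion just established.

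There is no real obstacle here: the proof is a straightforward composition of Lemma \ref{o-cont are obdd}, Proposition \ref{onb = bdd}, and Lemma \ref{directly}. The only point worth double-checking is the bookkeeping of hypotheses, namely that ``closed generating normal cone'' is strong enough to feed both the normality assumption of Lemma \ref{o-cont are obdd} and the closed-generating-normal assumption of Proposition \ref{onb = bdd}, which it clearly is.
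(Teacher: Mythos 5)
Your proof is correct and is exactly the argument the paper uses: the theorem is stated there as the combination of Lemma \ref{o-cont are obdd}, Proposition \ref{onb = bdd}, and Lemma \ref{directly}, with the same hypothesis bookkeeping you describe (and the typos $E\to X$, $ows\to owc$ are indeed just typos).
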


\begin{corollary}\label{BL onc are cont}
{\rm (cf., \cite[Proposition 3.2]{AEG2022},\cite[Lemma 2.3]{KTA2025})}
Each $\sigma$-\text{\rm w}-Lebesgue $($and hence each $\sigma$-order-to-weak continuous$)$ operator from a BL to a NS is bounded.
\end{corollary}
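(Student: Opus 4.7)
The plan is to verify that every Banach lattice satisfies the hypotheses of Theorem~\ref{OBS o-cont are obdd}, whereupon the conclusion is immediate. There is no deep content beyond recognizing that the corollary is a specialization of the theorem to the BL case.

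First, I would record the three structural properties of a BL $E$ needed to invoke Theorem~\ref{OBS o-cont are obdd}. The positive cone $E_+$ is norm closed, since the lattice operation $x\mapsto x^-$ is continuous and $E_+=\{x\in E:x^-=0\}$. The cone is generating, because every $x\in E$ decomposes as $x=x^+-x^-$. Finally, $E_+$ is normal, since a BL carries a lattice norm: $0\le x\le y$ forces $\|x\|\le\|y\|$, which is one of the standard equivalent formulations of normality of the cone (see \cite{AB2006,AT2007}). Hence $E$ is an OBS with closed generating normal cone in the sense required by the theorem.

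Having verified the hypothesis, applying Theorem~\ref{OBS o-cont are obdd} directly gives $\text{\rm L}^\sigma_{wLeb}(E,Y)\subseteq\text{\rm L}(E,Y)$ for an arbitrary NS $Y$, which is the principal assertion. For the parenthetical clause, the inclusion $\text{\rm L}^\sigma_{owc}(E,Y)\subseteq\text{\rm L}^\sigma_{wLeb}(E,Y)$ supplied by Lemma~\ref{directly} shows that every $\sigma$-order-to-weak continuous operator is in particular $\sigma$-\text{\rm w}-Lebesgue, so the same conclusion applies.

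Since everything reduces to checking standard BL facts and quoting the preceding theorem, I do not anticipate any genuine obstacle; the only care required is to cite the normality, closedness, and generation properties of $E_+$ in a Banach lattice cleanly rather than reproving them.
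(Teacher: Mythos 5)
Your proposal is correct and is exactly the intended argument: the corollary is an immediate specialization of Theorem~\ref{OBS o-cont are obdd}, obtained by checking that a Banach lattice is an OBS with closed generating normal cone and invoking Lemma~\ref{directly} for the parenthetical clause. The paper leaves this verification implicit, so your explicit checks of closedness, generation, and normality of $E_+$ are a faithful (and slightly more detailed) rendering of the same route.
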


\noindent
The next proposition is a straightforward consequence of Lemma \ref{directly} and Theorem \ref{OBS o-cont are obdd}.

\begin{proposition}\label{norm closed subspaces}
Let $X$ be an OBS with a closed generating normal cone and $Y$ a NS. Then
$\text{\rm L}_{Leb}(X,Y)$, $\text{\rm L}^\sigma_{Leb}(X,Y)$, $\text{\rm L}_{wLeb}(X,Y)$, $\text{\rm L}^\sigma_{wLeb}(X,Y)$,
$\text{\rm L}_{onc}(X,Y)$, $\text{\rm L}^\sigma_{onc}(X,Y)$, $\text{\rm L}_{owc}(X,Y)$, and $\text{\rm L}^\sigma_{owc}(X,Y)$
are norm closed subspaces of $\text{\rm L}(X,Y)$.
\end{proposition}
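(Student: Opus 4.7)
The plan rests on two ingredients: containment of each of the eight classes in $\text{\rm L}(X,Y)$, which is already available, together with a uniform $\varepsilon/2$ estimate whose second factor is controlled by the normality of $X_+$.

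First I would verify that each of the eight sets sits inside $\text{\rm L}(X,Y)$. By Lemma \ref{directly}, each of them is a subset of $\text{\rm L}^\sigma_{wLeb}(X,Y)$, and Theorem \ref{OBS o-cont are obdd} places the latter inside $\text{\rm L}(X,Y)$. That each class is a linear subspace is immediate from the defining convergences: order convergence and $\downarrow$-convergence in the domain, as well as norm and weak convergence in the range, are all compatible with linear combinations.

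For norm closedness, suppose $(T_n)\subset{\cal F}$ (one of the eight classes) satisfies $T_n\to T$ in operator norm with $T\in\text{\rm L}(X,Y)$. Let $(x_\alpha)$ be a net (or sequence) of the type appropriate to ${\cal F}$, i.e.\ either $x_\alpha\downarrow 0$ or $x_\alpha\convo 0$. The crucial observation is that normality of $X_+$ makes $(x_\alpha)$ eventually norm-bounded in both cases: if $x_\alpha\downarrow 0$ pick any $\alpha_0$ and note $x_\alpha\in[0,x_{\alpha_0}]$ for $\alpha\ge\alpha_0$; if $x_\alpha\convo 0$ with a dominating net $g_\beta\downarrow 0$, fix any $\beta_0$ and observe that $x_\alpha\in[-g_{\beta_0},g_{\beta_0}]$ eventually. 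In either case normality gives a constant $M$ with $\|x_\alpha\|\le M$ on the tail. Given $\varepsilon>0$ and a fixed $y^*\in Y^*$, pick $n$ with $\|T-T_n\|<\varepsilon/(2M\max(1,\|y^*\|))$, then $\alpha$ large enough that $\|T_n x_\alpha\|<\varepsilon/2$ in the norm case or $|y^*(T_n x_\alpha)|<\varepsilon/2$ in the weak case. The triangle inequality applied to
$$
\|Tx_\alpha\|\le\|(T-T_n)x_\alpha\|+\|T_n x_\alpha\|, \quad |y^*(Tx_\alpha)|\le\|y^*\|\|T-T_n\|\|x_\alpha\|+|y^*(T_n x_\alpha)|,
$$
yields $T\in{\cal F}$.

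The only real obstacle is bookkeeping: the argument must be run eight times with cosmetic variations (sequence vs.\ net in the domain, norm vs.\ weak in the range, $\downarrow$ vs.\ $\convo$), but the structural core is identical in all cases. The essential analytic content is that normality of $X_+$ converts the one-sided order control of $(x_\alpha)$ into uniform norm control, which is precisely what turns $\|T-T_n\|\to 0$ into $\|(T-T_n)x_\alpha\|\to 0$ uniformly on the relevant tail.
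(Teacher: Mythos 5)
Your proposal is correct and follows the same route the paper intends: the containment in $\text{\rm L}(X,Y)$ via Lemma \ref{directly} and Theorem \ref{OBS o-cont are obdd}, and then the standard $\varepsilon/2$ argument using normality of $X_+$ to norm-bound the tail of the net. The paper simply calls this a ``straightforward consequence'' and omits the closedness details that you have (correctly) supplied.
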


\noindent
Positive order-to-norm continuous order-to-norm bounded operators between NLs are not necessary bounded 
(see \cite[Remark 1.1.c)]{AEG2022}). From the other hand, even a positive $\sigma$-Lebesgue rank one contraction 
on an \text{\rm AM}-space need not to be \text{\rm w}-Lebesgue \cite[Example 3.1)]{AEG2022}.

\medskip
We say that the norm in and ONS $X$ is $(\sigma$-$)${\em order continuous} if $\|x_\alpha\|\to 0$ 
for every net $($sequence$)$ $x_\alpha\convo 0$ in $X$. In certain cases o-convergence is equivalent to ru-convergence.

\begin{lemma}\label{o-conv vs r-conv}{\em
Let $X$ be an OBS with a closed normal cone. 
\begin{enumerate}[$i)$]
\item\
If the norm in $X$ is order continuous then $x_\alpha\convo 0\Longleftrightarrow x_\alpha\convr 0$ in $X$.
\item\
If the norm in $X$ is $\sigma$-order continuous then $x_n\convo 0\Longleftrightarrow x_n\convr 0$ in $X$.
\end{enumerate}}
\end{lemma}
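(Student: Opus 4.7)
My plan is to dispatch the easy direction $\convr\Rightarrow\convo$ first and then tackle the two forward implications by converting the dominating net into a rapidly norm-shrinking sequence. The easy direction in both $(i)$ and $(ii)$ needs no continuity hypothesis: since a closed cone in a Banach space is Archimedean, $\frac{1}{n}u\downarrow 0$, and the ru-witness $(u,(\alpha_n))$ immediately serves as an o-witness.

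For the forward direction of $(i)$, I will start from a dominating net $g_\beta\downarrow 0$ and indices $\alpha_\beta$ with $-g_\beta\le x_\alpha\le g_\beta$ for $\alpha\ge\alpha_\beta$. Order continuity of the norm gives $\|g_\beta\|\to 0$, so I can inductively pick $\beta_k\ge\beta_{k-1}$ with $\|g_{\beta_k}\|\le 2^{-k}/k$; then $g_{\beta_k}$ is a decreasing sequence in $X_+$. The series $u:=\sum_{k=1}^{\infty}k\,g_{\beta_k}$ converges absolutely in the Banach space $X$, and closedness of $X_+$ puts both $u$ and each tail $u-k\,g_{\beta_k}=\sum_{j\ne k}j\,g_{\beta_j}$ in $X_+$, whence $g_{\beta_k}\le\frac{1}{k}u$. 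Choosing an increasing $(\tilde\alpha_k)$ with $\tilde\alpha_k\ge\alpha_{\beta_k}$ then transfers the bounds to $-\frac{1}{k}u\le x_\alpha\le\frac{1}{k}u$ for $\alpha\ge\tilde\alpha_k$, giving $x_\alpha\convr 0$.

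For the forward direction of $(ii)$ I want to run the same engine on a decreasing \emph{sequence} of dominators, since only sequential data interacts with $\sigma$-order continuity. Given $g_\beta\downarrow 0$ with $-g_\beta\le x_n\le g_\beta$ for $n\ge n_\beta$, I may take $\beta\mapsto n_\beta$ non-decreasing (which then forces $n_{\beta'}\le n_\beta$ whenever $\beta'\le\beta$); if $n_\beta$ is bounded then $x_n=0$ eventually by taking inf along the net, so the claim is trivial and I may assume each set $\{\beta:n_\beta\ge k\}$ is cofinal. Induction then yields $\beta_k\ge\beta_{k-1}$ with $n_{\beta_k}\ge k$, and $h_k:=g_{\beta_k}$ is a decreasing sequence in $X_+$ with $-h_k\le x_n\le h_k$ for $n\ge k$. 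The main obstacle will be promoting this to $h_k\downarrow 0$ as a \emph{sequence} so that $\sigma$-order continuity delivers $\|h_k\|\to 0$: I plan to rule out a nonzero common lower bound of the $h_k$ by combining the closedness of $X_+$, normality of the cone, and \cite[Theorem 2.23]{AT2007}, in the spirit of the argument used in the proof of Lemma \ref{o-cont are obdd}. Once $\|h_k\|\to 0$ is secured, applying the construction of $(i)$ to $(h_k)$ produces the required $u\in X_+$ and increasing index sequence, completing the proof of $x_n\convr 0$.
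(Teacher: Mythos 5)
Your treatment of the easy direction and of part $i)$ is correct and essentially the paper's own argument. For $\convr 0\Rightarrow\convo 0$ the paper uses normality (via \cite[Theorem 2.23]{AT2007}) to see that $\frac{1}{n}u\downarrow 0$, while you use closedness of the cone (Archimedeanness); both work under the stated hypotheses. In part $i)$ your construction of $u=\sum_k k\,g_{\beta_k}$ is the paper's construction, in fact marginally cleaner: the paper extracts $\beta_k$ with $\sup_{z\in[-g_{\beta_k},g_{\beta_k}]}\|z\|\le k^{-3}$ (using normality), whereas you only need $\|g_{\beta_k}\|$ summable after multiplication by $k$, and you correctly make explicit the role of closedness of $X_+$ in getting $u-kg_{\beta_k}\ge 0$.

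Part $ii)$, however, contains a genuine gap at exactly the point you flag as the ``main obstacle,'' and the repair you propose cannot work. After extracting $h_k:=g_{\beta_k}$ you need $h_k\downarrow 0$ \emph{as a sequence} in order to invoke $\sigma$-order continuity of the norm. But a decreasing sequence extracted from a decreasing net $g_\beta\downarrow 0$ need not have infimum $0$: a lower bound of $\{g_{\beta_k}\}_k$ is only forced below those $g_\beta$ with $\beta\le\beta_k$ for some $k$, and $\{\beta_k\}$ need not be cofinal in the index set (directed sets in general admit no cofinal increasing sequences; think of nets indexed by the countable subsets of $[0,1]$ ordered by inclusion). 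Your proposed fix — ruling out a nonzero common lower bound ``in the spirit of'' the proof of Lemma \ref{o-cont are obdd} — is circular: that argument needs upper bounds whose \emph{norms} tend to $0$ (there, $y_0\le 2^{1-n}u$ with $\|2^{1-n}u\|\to 0$), whereas here the only route to $\|h_k\|\to 0$ is $\sigma$-order continuity applied to $(h_k)$, which presupposes the very fact $h_k\downarrow 0$ that you are trying to establish. Closedness and normality of the cone give no norm decay for the $h_k$ by themselves. To be fair, the paper dispatches $ii)$ with ``the proof is similar,'' so it offers no more detail than you do; the statement becomes unproblematic if one adopts the convention that o-convergence of a \emph{sequence} is witnessed by a decreasing \emph{sequence} of dominators, in which case $ii)$ follows verbatim from your argument for $i)$ — but under the paper's net-witness definition your step from the net $(g_\beta)$ to a sequence $(h_k)$ with $h_k\downarrow 0$ is unjustified as written.
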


\begin{proof}
First, we show that if $X_+$ is normal then $x_\alpha\convr 0\Longrightarrow x_\alpha\convo 0$ in $X$. 
Suppose $x_\alpha\convr 0$. Then, for some $u\in X_+$ there exists an increasing sequence 
$(\alpha_n)$, $-\frac{1}{n}u\le x_\alpha\le\frac{1}{n}u$ for all $\alpha\ge\alpha_n$. 
Let $0\le y_0\le\frac{1}{n}u$ for all $n\in\mathbb{N}$. Since $\big\|\frac{1}{n}u\big\|\to 0$ then
$y_0=0$ by \cite[Theorem 2.23]{AT2007}. It follows $\frac{1}{n}u\downarrow 0$, and hence $x_\alpha\convo 0$.

\medskip
$i)$\
Let $X$ has order continuous norm and $x_\alpha\convo 0$ in $X$. Take a net $g_\beta\downarrow 0$ in $X$
such that, for each $\beta$ there is  $\alpha_\beta$ satisfying $\pm x_\alpha\le g_\beta$ for $\alpha\ge\alpha_\beta$.
Find an increasing sequence $(\beta_k)$ such that $\sup\limits_{z\in[-g_{\beta_k},g_{\beta_k}]}\|z\|\le k^{-3}$ for all $k\in\mathbb{N}$, and
set $u=\|\cdot\|$-$\sum_{k=1}^\infty kg_{\beta_k}$. Then $\pm x_\alpha\le g_{\beta_k}\le k^{-1}u$ for $\alpha\ge\alpha_{\beta_k}$.
Choose any increasing sequence $(\alpha_k)$ with $\alpha_k\ge\alpha_{\beta_1},\alpha_{\beta_2},\ldots,\alpha_{\beta_k}$.
Then $\pm x_\alpha\le k^{-1}u$ for $\alpha\ge\alpha_k$, and hence $x_\alpha\convr 0$.

\medskip
$ii)$\
The proof is similar.
\end{proof}

\begin{theorem}\label{Lebesgue are onc}
Let $X$ be an OBS with a closed generating normal cone and $Y$ a NS.  
\begin{enumerate}[$i)$]
\item\
If $X$ has $\sigma$-order continuous norm then 
$$
   \text{\rm L}^\sigma_{Leb}(X,Y)=\text{\rm L}^\sigma_{onc}(X,Y) \ \ and \ \ \text{\rm L}^\sigma_{wLeb}(X,Y)=\text{\rm L}^\sigma_{owc}(X,Y).
$$
\item\
If $X$ has order continuous norm then 
$$
   \text{\rm L}_{Leb}(X,Y)=\text{\rm L}_{onc}(X,Y) \ \ and \ \ \text{\rm L}_{wLeb}(X,Y)=\text{\rm L}_{owc}(X,Y).
$$
\end{enumerate}
\end{theorem}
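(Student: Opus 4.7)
The plan is to prove something stronger than stated: under the hypotheses of $i)$, the four spaces $\text{\rm L}^\sigma_{Leb}(X,Y)$, $\text{\rm L}^\sigma_{wLeb}(X,Y)$, $\text{\rm L}^\sigma_{onc}(X,Y)$, $\text{\rm L}^\sigma_{owc}(X,Y)$ actually coincide (and analogously in $ii)$). Lemma \ref{directly} already supplies both chains $\text{\rm L}^\sigma_{onc}(X,Y)\subseteq\text{\rm L}^\sigma_{Leb}(X,Y)\subseteq\text{\rm L}^\sigma_{wLeb}(X,Y)$ and $\text{\rm L}^\sigma_{onc}(X,Y)\subseteq\text{\rm L}^\sigma_{owc}(X,Y)\subseteq\text{\rm L}^\sigma_{wLeb}(X,Y)$, so it is enough to close the loop by establishing the single reverse inclusion $\text{\rm L}^\sigma_{wLeb}(X,Y)\subseteq\text{\rm L}^\sigma_{onc}(X,Y)$; both equalities in $i)$ then drop out at once.

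To carry this out, I would fix $T\in\text{\rm L}^\sigma_{wLeb}(X,Y)$ and a sequence $x_n\convo 0$ in $X$, aiming to show $\|Tx_n\|\to 0$. Two earlier results do all the work. First, Lemma \ref{o-cont are obdd} gives $T\in\text{\rm L}_{onb}(X,Y)$, so for every $u\in X_+$ there is a constant $M_u<\infty$ with $\|Ty\|\le M_u$ for all $y\in[-u,u]$. Second, since the norm on $X$ is $\sigma$-order continuous, Lemma \ref{o-conv vs r-conv}$(ii)$ upgrades $x_n\convo 0$ to $x_n\convr 0$, producing some $u\in X_+$ and an increasing sequence of indices $(n_k)$ with $\pm x_n\le\tfrac{1}{k}u$, i.e. $x_n\in\tfrac{1}{k}[-u,u]$, for all $n\ge n_k$. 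Combining the two, $\|Tx_n\|\le M_u/k$ whenever $n\ge n_k$, hence $\|Tx_n\|\to 0$ as required.

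Part $ii)$ runs identically with Lemma \ref{o-conv vs r-conv}$(i)$ replacing its sequential version, and using the obvious inclusion $\text{\rm L}_{wLeb}(X,Y)\subseteq\text{\rm L}^\sigma_{wLeb}(X,Y)$ so as to still import order-to-norm boundedness of $T$ from Lemma \ref{o-cont are obdd}.

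No real obstacle is anticipated. The whole content is that $(\sigma$-$)$order continuity of the norm forces o-convergence to coincide with ru-convergence, after which the order-to-norm boundedness that comes for free from the $\text{\rm w}$-Lebesgue hypothesis immediately upgrades even the weakest continuity notion in the list to the strongest. The closed, generating, normal cone assumption is used exactly because both Lemma \ref{o-cont are obdd} (for normality) and Lemma \ref{o-conv vs r-conv} (for a closed normal cone) require it.
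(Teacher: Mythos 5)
Your proposal is correct, and it closes the argument by a slightly different route than the paper. Both proofs reduce to a single reverse inclusion via Lemma \ref{directly} and both invoke Lemma \ref{o-conv vs r-conv} to upgrade $x_n\convo 0$ to $x_n\convr 0$; the difference lies in the last step. The paper pushes the ru-convergence all the way to $\|x_n\|\to 0$ using normality of $X_+$, and then applies Theorem \ref{OBS o-cont are obdd} to get that $T$ is \emph{norm bounded}, so that $\|Tx_n\|\le\|T\|\,\|x_n\|\to 0$; this is where the generating hypothesis enters, since Theorem \ref{OBS o-cont are obdd} rests on Proposition \ref{onb = bdd}. You instead stop at the ru-estimate $kx_n\in[-u,u]$ for $n\ge n_k$ and use only \emph{order-to-norm boundedness} of $T$ from Lemma \ref{o-cont are obdd}, getting $\|Tx_n\|\le M_u/k$ directly. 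Your version therefore never needs $X_+$ to be generating (only closed and normal, which is all that Lemmas \ref{o-cont are obdd} and \ref{o-conv vs r-conv} require), and it makes explicit the stronger conclusion --- already implicit in the paper's proof --- that all four classes coincide (indeed, under the paper's hypotheses they all equal $\text{\rm L}(X,Y)$, which your weaker hypotheses no longer give). The paper's route buys a shorter write-up by reusing Theorem \ref{OBS o-cont are obdd}; yours buys a marginally more general statement. The handling of part $ii)$ via $\text{\rm L}_{wLeb}(X,Y)\subseteq\text{\rm L}^\sigma_{wLeb}(X,Y)$ is also sound.
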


\begin{proof}
We prove only $i)$ as the proof of $ii)$ is similar.
In view of Lemma \ref{directly}, we need to prove $\text{\rm L}^\sigma_{Leb}(X,Y)\subseteq\text{\rm L}^\sigma_{onc}(X,Y)$ 
and $\text{\rm L}^\sigma_{wLeb}(X,Y)\subseteq\text{\rm L}^\sigma_{owc}(X,Y)$. 
Let $x_n\convo 0$ in $X$. Then, $x_n\convr 0$ by Lemma \ref{o-conv vs r-conv}, and hence $\|x_n\|\to 0$ due to normality of $X_+$. 

If $T\in\text{\rm L}^\sigma_{Leb}(X,Y)$ then $T\in\text{\rm L}(X,Y)$ by Theorem \ref{OBS o-cont are obdd}, and hence $\|Tx_n\|\to 0$.
Suppose $T\in\text{\rm L}^\sigma_{wLeb}(X,Y)$. Then again $T\in\text{\rm L}(X,Y)$. So $\|Tx_n\|\to 0$, and hence $Tx_n\convw 0$.
\end{proof}

\begin{corollary}\label{BL Lebesgue are cont}
Let $E$ be a BL with order continuous norm and $Y$ a NS.  Then $\text{\rm L}^\sigma_{Leb}(E,Y)=\text{\rm L}^\sigma_{onc}(E,Y)$,
$\text{\rm L}_{Leb}(E,Y)=\text{\rm L}_{onc}(E,Y)$, $\text{\rm L}^\sigma_{wLeb}(E,Y)=\text{\rm L}^\sigma_{owc}(E,Y)$, and
$\text{\rm L}_{wLeb}(E,Y)=\text{\rm L}_{owc}(E,Y)$.
\end{corollary}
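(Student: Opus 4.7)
The plan is to deduce this corollary directly from Theorem \ref{Lebesgue are onc} by verifying that a Banach lattice with order continuous norm satisfies all the hypotheses required there, namely being an OBS with a closed, generating, normal cone whose norm is ($\sigma$-)order continuous.

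Concretely, I would first recall three standard facts about any Banach lattice $E$. The positive cone $E_+$ is closed in norm, since the lattice operations are norm continuous and $E_+=\{x\in E : x^-=0\}$. The cone is generating, via the decomposition $x=x^+-x^-$ for every $x\in E$. Finally, $E_+$ is normal because $0\le y\le x$ in a Banach lattice implies $|y|\le|x|$, hence $\|y\|\le\|x\|$; equivalently, order intervals are norm bounded. Thus $E$ is an OBS with a closed generating normal cone, so the hypotheses on the domain in Theorem \ref{Lebesgue are onc} are met.

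Next, since $E$ has order continuous norm, it automatically has $\sigma$-order continuous norm. Therefore parts $i)$ and $ii)$ of Theorem \ref{Lebesgue are onc} both apply to $E$ and $Y$, yielding the four stated equalities
\[
\text{\rm L}^\sigma_{Leb}(E,Y)=\text{\rm L}^\sigma_{onc}(E,Y),\quad \text{\rm L}^\sigma_{wLeb}(E,Y)=\text{\rm L}^\sigma_{owc}(E,Y),
\]
\[
\text{\rm L}_{Leb}(E,Y)=\text{\rm L}_{onc}(E,Y),\quad \text{\rm L}_{wLeb}(E,Y)=\text{\rm L}_{owc}(E,Y).
\]
There is no real obstacle here: the whole content of the corollary is the specialization of Theorem \ref{Lebesgue are onc} to Banach lattices, and the only thing to check is that the structural conditions (closed, generating, normal cone) are built into the definition of a Banach lattice. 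The proof is therefore only a short verification plus an invocation of the theorem.
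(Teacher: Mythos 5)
Your proposal is correct and matches the paper's (implicit) argument: the corollary is stated as an immediate specialization of Theorem \ref{Lebesgue are onc}, and the only content is checking that a Banach lattice is an OBS with a closed, generating, normal cone, exactly as you do. No further comment is needed.
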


\noindent 
Corollary \ref{BL Lebesgue are cont} is a partial extension of results of \cite[Lemma 1]{JAM2021}, \cite[Lemma 2.1]{AEG2022},  
\cite[Theorem 3]{ZSC2023}, and \cite[Proposition 1.5]{KTA2025} to the OVS setting.

\medskip
\noindent
We give some conditions under which each order bounded operator is order-to-norm continuous.

\begin{proposition}\label{when ob are onc}
Let $X$ be an OBS with a closed normal cone and $Y$ an ONS with a generating normal cone.
\begin{enumerate}[$i)$]
\item\
If $X$ has $\sigma$-order continuous norm then ${\cal L}_{ob}(X,Y)\subseteq\text{\rm L}^\sigma_{onc}(X,Y)$.
\item\
If $X$ has order continuous norm then ${\cal L}_{ob}(X,Y)\subseteq\text{\rm L}_{onc}(X,Y)$.
\end{enumerate}
\end{proposition}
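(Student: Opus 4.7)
The strategy is to route $o$-convergence through $ru$-convergence and then through normality, exactly as in the proof of Theorem \ref{Lebesgue are onc}, but this time using order boundedness of $T$ instead of a Lebesgue-type property. I will prove $i)$ in detail; the net version $ii)$ is identical once one replaces sequences by nets and invokes part $i)$ of Lemma \ref{o-conv vs r-conv} instead of part $ii)$.

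Let $T\in{\cal L}_{ob}(X,Y)$ and take a sequence $x_n\convo 0$ in $X$. Since $X$ is an OBS with closed normal cone and $\sigma$-order continuous norm, Lemma \ref{o-conv vs r-conv}$\,ii)$ yields $x_n\convr 0$ in $X$. Now, because $Y_+$ is generating, the cited result \cite[Theorem 2.4]{EEG2025} applies: every order bounded operator from an OVS to an OVS with generating cone is $ru$-continuous. Thus $Tx_n\convr 0$ in $Y$, meaning there exist $v\in Y_+$ and an increasing sequence of indices $(n_k)$ such that
\[
   -\tfrac{1}{k}v\le Tx_n\le \tfrac{1}{k}v \qquad \text{for all}\ n\ge n_k.
\]

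For such $n$ we have $Tx_n+\tfrac{1}{k}v\in[0,\tfrac{2}{k}v]$, and since $Y_+$ is normal there is a constant $C>0$ with $\|Tx_n+\tfrac{1}{k}v\|\le \tfrac{2C}{k}\|v\|$. Therefore
\[
   \|Tx_n\|\le \tfrac{2C}{k}\|v\|+\tfrac{1}{k}\|v\|\qquad (n\ge n_k),
\]
which gives $\|Tx_n\|\to 0$. Hence $T\in\text{\rm L}^\sigma_{onc}(X,Y)$, proving $i)$.

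For $ii)$ the same argument goes through verbatim with a net $x_\alpha\convo 0$, appealing to Lemma \ref{o-conv vs r-conv}$\,i)$ in place of $ii)$ to get $x_\alpha\convr 0$; the remaining steps (ru-continuity of $T$ and the normality estimate) do not distinguish sequences from nets. The only step requiring genuine input is the translation from $o$-convergence to $ru$-convergence; everything else is a routine book-keeping use of normality of $Y_+$ and of the general ru-continuity of order bounded operators into spaces with generating cone.
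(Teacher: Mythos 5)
Your proposal is correct and follows essentially the same route as the paper: Lemma \ref{o-conv vs r-conv} to upgrade $x_n\convo 0$ to $x_n\convr 0$, the ru-continuity of order bounded operators into a space with generating cone (the paper cites \cite[Corollary 2.5]{EEG2025} where you cite \cite[Theorem 2.4]{EEG2025}, but it is the same fact) to get $Tx_n\convr 0$, and normality of $Y_+$ to conclude $\|Tx_n\|\to 0$. Your explicit normality estimate just spells out what the paper leaves implicit.
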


\begin{proof}
We prove only $i)$ as the proof of $ii)$ is similar.
Let $T\in{\cal L}_{ob}(X,Y)$ and $x_n\convo 0$ in $X$. By Lemma \ref{o-conv vs r-conv}, $x_n\convr 0$.
It follows from \cite[Corollary 2.5]{EEG2025} that $Tx_n\convr 0$. The normality of $Y$ provides $\|Tx_n\|\to 0$.
\end{proof}



\smallskip
{\normalsize 
}
\end{document}